\newtheorem{theorem}{Theorem}[section]
\newtheorem{lemma}[theorem]{Lemma}
\newtheorem{corollary}[theorem]{Corollary}
\newcommand{\paul}[1]{\textcolor{black}{#1}}
\theoremstyle{definition}
\newtheorem{remark}{Remark}[section]
\newtheorem{example}{Example}[section]
\renewcommand{\H}{\mathcal{H}}
\renewcommand{\S}{\mathcal{S}}
\newcommand{\perm}{\mathrm{perm}}
\newcommand{\id}{\mathsf{I}}
\newcommand{\tr}{\mathrm{Tr}}
\newcommand{\abcExample}{{\,\atop \mathord{\includegraphics[height=10ex]{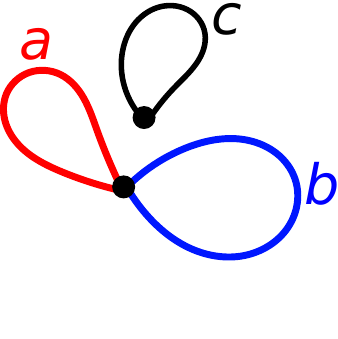}}}}
\journal{}
\begin{document}
\begin{frontmatter}

\title{An Hopf algebra for counting simple cycles}

\author[York]{Pierre-Louis Giscard\corref{Corres}}
\cortext[Corres]{Corresponding author}
\ead{pierre-louis.giscard@york.ac.uk}
\author[Nantes]{Paul Rochet}
\author[York]{Richard C. Wilson}
\address[York]{University of York, Department of Computer Sciences}
\address[Nantes]{Universit\'e de Nantes, laboratoire de math\'ematiques Jean Leray}

\begin{abstract}
Simple cycles, also known as self-avoiding polygons, are cycles on graphs which are not allowed to visit any vertex more than once. 
We present an exact formula for enumerating the simple cycles of any length on any directed graph involving a sum over its induced subgraphs. This result stems from an Hopf algebra, which we construct explicitly, and which provides further means of counting simple cycles. Finally, we obtain a more general theorem asserting that any Lie idempotent can be used to enumerate simple cycles.  
\end{abstract}

\begin{keyword}
Simple cycles\sep induced subgraph\sep Hopf algebra\sep trace monoid.
\MSC[2010] 05E99\sep  16T05\sep 05C30\sep  05C38
\end{keyword}

\end{frontmatter}


\section{Introduction}
Counting simple cycles, that is cycles on graphs which do not visit any vertex more than once, is a problem of fundamental importance with numerous applications in many branches of mathematics. In view of the existing research, this problem should be divided into two main subquestions. One concerns the enumeration of "short" simple cycles with direct applications in the analysis of real-world networks. The other, more related to enumerative combinatorics (see e.g. \cite{Flajolet2009,madras2013self}), concerns the asymptotic growth of the number of simple cycles of length $\ell$ on regular lattices as $\ell$ tends to infinity.

As these two problems have been recognized for a long time, the strategies implemented so far to solve them have been qualitatively different. The practical enumeration of short primes has been tackled via diverse algorithmic and analytic methods, e.g. the inclusion-exclusion principle \cite{bax1993inclusion,Bjorklund2010,karp1982dynamic}, recursive expressions of the adjacency matrix \cite{Khomenko1972}, sieves \cite{bax1996finite} or immanantal equations \cite{cash2007number}. In contrast, the asymptotic growth in the number of long simple cycles on regular lattices has been mostly studied using probability theory \cite{Lawler2004,madras2013self,Copin2012}. 

In our view both problems can be treated with the same tools rooted in the algebraic combinatorics of paths \cite{GiscardRochet2016}. The literature of graph theory contains many different approaches to defining paths on graphs as algebraic objects. A particularly promising one comes from the partially commutative monoid formalism introduced in the 1960s by Cartier and Foata \cite{cartier1969}. Within this framework a path is seen as a word whose letters are the oriented edges of the graph. In the present study we use this formalism to obtain an exact formula for enumerating the simple cycles of any length via a sum over the induced subgraphs of a graph. We then show that this formula stems from an Hopf algebra, which provides further means of counting such cycles. Finally, these results are themselves subsumed under a more general theorem asserting that any Lie idempotent can be used to enumerate the simple cycles.    

The remainder of this article is organized as follows. In section \ref{background} we recall the necessary background concerning Cartier and Foata's formalism as well as recent extensions of it. We proceed in section \ref{PrimeCount} by proving an exact formula for counting the simple cycles relying on this framework. We then show in section \ref{Hopf} that this formula ultimately stems from an Hopf algebra and conclude with a further generalisation to Lie idempotents in section \ref{Lie}.
%



\section{Background}\label{background}

Hikes were introduced in a seminal work by Cartier and Foata\footnote{In their original paper in french, Cartier and Foata use the term circuit which was later changed to hike to avoid confusion with other objects in graph theory.} as a generalisation of cycles to possibly disconnected objects \cite{cartier1969}. Relaxing the connectedness condition, Cartier and Foata showed that hikes admit a simple description as words on the alphabet of graph edges and provide a powerful partially commutative framework for algebraic combinatorics on graphs. Recent developments \cite{espinasse2016relations, GiscardRochet2016} have shown that hikes are also words on the alphabet of simple cycles of the graph. In this section we recall a few essential results pertaining to hikes.\\

Let $G=(V,E)$ be a digraph with finite vertex set $V = \{ v_1,...,v_N \}$ and edge set $E$, which may contain loops. Let $\mathsf{W} = (\omega_{ij})_{i,j\,=\,1,\hdots,N}$ represent the weighted adjacency matrix of the graph, built by attributing a formal variable $\omega_{ij}$ to every pair $(v_i,v_j ) \in V^2$ and setting $\omega_{ij}=0$ whenever there is no edge from $v_i$ to $v_j$. In this setting, an edge is identified with a non-zero variable $\omega_{ij}$. A walk of length $\ell$ from $v_i$ to $v_j$ on $G$ is a sequence $p = \omega_{i i_1} \omega_{i_1 i_2} \cdots \omega_{i_{\ell-1} j}$ of $\ell$ contiguous edges. The walk $p$ is \textit{open} if $i \neq j$ and \textit{closed} (a cycle) otherwise. 
A walk $p$ is \textit{simple} if it does not cross the same vertex twice, that is, if the indices $i,i_1,\hdots,i_{\ell-1},j$ are mutually different, with the possible exception $i=j$ if $p$ is closed. Self-loops $\omega_{ii}$ and backtracks $\omega_{ij} \omega_{ji}$ are simple cycles of lengths $1$ and $2$ respectively.

\subsection{Hikes} 
We briefly recall the definition and main properties of hikes. We refer to  \cite{GiscardRochet2016} for further details. Let $\mathcal P$ denote the set of simple cycles in $G$. Hikes are defined as the partially commutative monoid $\mathcal H$ with alphabet $\mathcal P$ and independence relation $ \{ (c, c') \in\mathcal P^2: V(c) \cap V(c') = \emptyset \}$. In less technical terms, a hike is a finite sequence of simple cycles up to permutations of successive vertex-disjoint cycles. 

Hikes form a partially commutative monoid, or trace monoid, when endowed with the concatenation as multiplication and identity element $1$ (the empty hike). Throughout the paper, the concatenation of two hikes $h,h'$ shall be denoted by $h.h'$ or simply $hh'$. If a hike $h$ can be written as the concatenation $h = dd'$ for $d,d' \in \mathcal H$, then we say that $d$ (resp. $d'$) is a left-divisor (resp. right-divisor) of $h$. 
Unless stated otherwise, a divisor $d$ of $h$ always refers to a left-divisor and is denoted by $d | h$. 

The prime elements in $(\mathcal H, .)$ are the simple cycles as they verify the prime property: $p | h h'  \Longrightarrow p | h$ or $p | h'$. For this reason, the simple cycles composing a hike $h$ are called the prime factors of $h$. We emphasize that a prime factor may not be a divisor. 

A hike is \textit{self-avoiding} if it does not visit the same vertex twice, i.e. if it is composed of vertex-disjoint simple cycles. Equivalently, $h$ is self-avoiding if $|V(h)| = \ell(h)$ where $V(h)$ denotes the support of $h$ (the set of the vertices it crosses) and $\ell(h)$ its length. By convention, the empty hike is self-avoiding since it has zero length and empty support. 

%
%
%

\subsection{Formal series on hikes}\label{sec:hike_gen}

The hike formalism is perfectly suited to describe the analytic properties of the graph $G$ via its labeled adjacency matrix $\mathsf W$, defined by $\mathsf W_{ij} := \omega_{ij}$ if $(i,j) \in E$ and $\mathsf W_{ij} := 0$ otherwise. In particular, the labeled adjacency matrix $\mathsf{W}$ preserves the partially commutative structure of the hikes \paul{provided that the edges $\omega_{ij}$ are endowed with the commutation rule: $\omega_{ij} \omega_{i'j'} =  \omega_{i'j'}\omega_{ij}$ if $i \neq i'$} \cite{GiscardRochet2016}. Thanks to this property, formal series on hikes can be represented as functions of this matrix and manipulated via its analytical transformations. 
We recall below some examples from \cite{diekert1990combinatorics,GiscardRochet2016} illustrating these observations.\\

\begin{example}
The trace monoid $\mathcal H$ of hikes forms a partially ordered set, or poset, when the hikes are given an order based on left-divisibility \cite{GiscardRochet2016}. As casually discussed in \cite{cartier1969}, the characteristic function of this poset, i.e. the constant function $\zeta(h)=1$ for $h \in \mathcal H$, is generated from the determinant of the inverse of $(\id - \mathsf W)$, more precisely 
$$ \det \big( \id - \mathsf W \big)^{-1} = \sum_{h \in \mathcal H} \zeta(h) h= \sum_{h \in \mathcal H} h. $$
\paul{In this partially commutative context, $ \det \big( \id - \mathsf W \big)^{-1}$ can be defined formally as the inverse of the hike formal series $\det \big( \id - \mathsf W \big)$, we refer to \cite{GiscardRochet2016} for the technical details.} The characteristic function is also the zeta function of the reduced incidence algebra of $\mathcal H$ as per the now standard terminology introduced by G. C. Rota \cite{rota1987foundations}, which explains the notation. This formula alone highlights the importance of the hike poset $\H$ and its ability to encapsulate the information on the graph structure via its label\paul{ed} adjacency matrix $\mathsf{W}$.
\end{example}
%

\begin{example}\label{MobiusEx}
A standard result in trace monoid theory states that the M\"{o}bius function\paul{, that is the inverse of the zeta function,} of such a monoid is expressed as a series over words composed of different commuting letters, see for instance \cite{diekert1990combinatorics}. For the trace monoid $\H$, where \paul{different} hikes commute if they are vertex-disjoint, the M\"{o}bius function is given by
$$ \mu(h) = \left\{ \begin{array}{cl} (-1)^{\Omega(h)} & \text{if $h$ is self-avoiding} \\ 0 & \text{otherwise}  \end{array} \right. \ , \ h \in \mathcal H  $$
where $\Omega(h)$ counts, with multiplicity, the number of prime factors of $h \in \mathcal H$. The M\"{o}bius function admits an expression involving the labeled adjacency matrix $\mathsf W$, namely
	$$ \det(\id - \mathsf W) = \sum_{\substack{h \in \mathcal H}} \mu(h) h. $$
This relation has been discussed under many forms in the literature, see e.g. \cite{cartier1969,choffrut1999determinants,diekert1988transitive,GiscardRochet2016}. The immediate consequence on the divisors of a hike arises from writing
\begin{equation}\label{inv_mob} 1 =  \det(\id - \mathsf W) .\det(\id - \mathsf W)^{-1} = \sum_{\substack{h \in \mathcal H}} \mu(h) h . \sum_{h \in \mathcal H} h  = \sum_{h \in \mathcal H} \bigg( \sum_{d | h} \mu(d) \bigg) h. \end{equation}
From this we deduce the M\"{o}bius inversion formula $\sum_{d | h} \mu(d) = 0$ whenever $h \neq 1$.\\
\end{example}

\begin{example} The hike von Mangoldt function arises from the trace of $ (\id - \mathsf W)^{-1}  - \id$, 
$$ \tr \big( (\id - \mathsf W)^{-1}  - \id \big) = \sum_{h \in \mathcal H} \Lambda(h) h.  $$
The diagonal of $(\id - \mathsf W)^{-1} - \id=  \mathsf W + \mathsf W^2 + ...$ only involves non-empty walks, for which a contiguous representation can be put in one-to-one correspondence with a starting vertex. Thus, $\Lambda(h)$ counts the number of contiguous representations of $h$, that is the number of ways to write $h$ as a succession of contiguous edges (in particular $\Lambda(h) = 0$ if $h$ is not a walk). Remark that in a graph where all simple cycles are disjoint, the number of contiguous representations of a closed hike is $\Lambda(h) = \ell(p)$ if $h = p^k$ for $p$ a simple cycle and $k \geq 1$ and $\Lambda(h)=0$ otherwise. This highlights the relation with number theory discussed in \cite{GiscardRochet2016} where the von Mangoldt function, obtained from a log-derivative of the zeta function, draws a parallel between the length of a hike and the logarithm of an integer. An important consequence is the following expression of the length as a product of the von Mangoldt function and the zeta function, 
\begin{equation} \label{van_m} 
 \sum_{h \in \mathcal H} \Lambda(h) h .  \sum_{h \in \mathcal H} h = \sum_{h \in \mathcal H} \bigg( \sum_{d | h} \Lambda(d)  \bigg) h = \sum_{h \in \mathcal H} \ell(h) h,
\end{equation}
which recovers after identification $\sum_{d | h} \Lambda(d) = \ell(h) \ , \ h \in \mathcal H$. 
\end{example}

\begin{example}\label{LambdaEx} The hike Liouville function is defined by
$$ \lambda(h) = (-1)^{\Omega(h)} \ , \ h \in \mathcal H.  $$
As in the number-theoretic version, the inverse of the Liouville function is the absolute value of the M\"{o}bius function (see Proposition 3.10 in \cite{GiscardRochet2016}). In the graph context, the absolute value of the M\"{o}bius function attributes the value $1$ to every self-avoiding closed hike and thus writes as the permanent of $\id + \mathsf W$:
$$ \perm( \id + \mathsf W) = \sum_{\substack{h \in \mathcal S}} h = \sum_{\substack{h \in \mathcal H}} | \mu(h) | h = \frac{1}{\sum_{h \in \mathcal H} \lambda(h)  h}, $$
where $\mathcal S$ denotes the set of self-avoiding hikes. It is somewhat remarkable that the inverse relation between $\lambda$ and $| \mu |$, which holds in the fully commutative poset of the integers, is still verified in this more general partially commutative framework. 
\end{example}
%
%

\section{Counting primes via a convolution over induced subgraphs}\label{PrimeCount}

Let $\mathcal G$ be the set of finite digraphs. For $G = (V(G),E(G)) \in \mathcal G$, we say that $H = (V(H), E(H)) \in \mathcal G$ is an induced subgraph of $G$ (which we denote by $H \prec G$) if $V(H) \subseteq V(G)$ and $E(H) = E(G) \cap V(H)^2$. If $H \prec G$, then $G-H$ designates the subgraph of $G$ induced by $V(G) \setminus V(H)$. 
Let $(\mathcal A,.,+)$ be an algebra, for two functions $\phi, \psi: \mathcal G \to \mathcal A$, the induced subgraph convolution between $\phi$ and $\psi$ is defined by
$$ (\phi * \psi) [G] = \sum_{H \prec G} \phi[H] \psi[G-H] \ , \ G \in \mathcal G,  $$
where the sum runs over all induced subgraphs of $G$ including the empty graph $\emptyset$ and $G$ itself. In this section, we investigate the induced subgraph convolution between functions with values in the algebra $\mathbb R\langle \mathcal H \rangle$ of formal series on hikes with real coefficients. Examples of such functions arising from usual expressions of the labeled adjacency matrix $\mathsf W_H$ of a digraph $H$ have been discussed in Section \ref{sec:hike_gen}. 

\paul{\begin{remark}\label{ConvoRemark} For all functions $\phi: \mathcal G \to \mathbb R\langle \mathcal H \rangle$ considered in this paper, the formal series $\phi[H]$ only involves hikes $h$ whose support $V(h)$ lies in $H$. In other words, the coefficient of $h$ in $\phi[H]$ is zero whenever $V(h) \nsubseteq V(H)$. Consequently, the convolution $\phi * \psi$ between two such functions $\phi,\psi$ is always commutative.
\end{remark}}

\begin{lemma}\label{lem:1} For all $G \in \mathcal G$, 
$$\sum_{H \prec G} \det (- \mathsf W_{H} )\, \perm (\mathsf W_{G-H} ) = \delta[G] := \left\{ \begin{array}{cl} 1 & \text{if } G = \emptyset \\ 0 & \text{otherwise,}\end{array} \right. $$
where we use the convention $\perm (\mathsf W_\emptyset )= \det (- \mathsf W_{\emptyset} )=1$.
\end{lemma}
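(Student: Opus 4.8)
The plan is to interpret the stated identity as an instance of Möbius inversion in the trace monoid of hikes, transported through the induced-subgraph convolution. The key observation is that both $\det(-\mathsf W_H)$ and $\perm(\mathsf W_{G-H})$ are, up to signs and restriction of support, the hike formal series appearing in Examples \ref{MobiusEx} and \ref{LambdaEx}. Specifically, $\det(\id - \mathsf W_H) = \sum_{h}\mu(h)h$ where the sum is over hikes supported in $H$, and $\perm(\id + \mathsf W_{G-H}) = \sum_{h}|\mu(h)|h$ over hikes supported in $G-H$. The first step is therefore to rewrite the determinant and permanent of $\mathsf W$ (without the identity) in terms of these series: by selecting the top-degree behaviour, or more directly by noting that $\det(-\mathsf W_H) = \sum_{h : V(h) = V(H)} \mu(h)$ and $\perm(\mathsf W_{G-H}) = \sum_{h : V(h) = V(G-H)} |\mu(h)|$, i.e. the sums run over \emph{spanning} self-avoiding hikes of $H$ and $G-H$ respectively (those whose support is the full vertex set). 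This is because $\det$ and $\perm$ of $\mathsf W_H$ pick out exactly the permutations of $V(H)$, each of which decomposes into disjoint simple cycles covering all of $V(H)$.

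Second, I would combine the two sums: a pair consisting of a spanning self-avoiding hike $d$ of some induced subgraph $H \prec G$ together with a spanning self-avoiding hike $h'$ of $G-H$ is the same datum as a single self-avoiding hike $h$ on $G$ (namely $h = d.h'$, which is well-defined since $V(d)$ and $V(h')$ are disjoint) together with a choice of left-divisor $d \mid h$ (equivalently, a way of splitting the disjoint simple-cycle factors of $h$ into two groups, the first spanning $H := G[V(d)]$). Under this bijection the summand $\det(-\mathsf W_H)\,\perm(\mathsf W_{G-H})$ contributes $\mu(d)\,|\mu(h')| = \mu(d)\,(-1)^{\Omega(h)-\Omega(d)}\lambda(h)\cdot(\pm)$; tracking signs, since $\mu(d) = (-1)^{\Omega(d)}$ for self-avoiding $d$ and $|\mu(h')| = (-1)^{\Omega(h')} \lambda(h')^{-1}\cdots$ — more cleanly, $\mu(d)\,|\mu(h')| = (-1)^{\Omega(d)}$. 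Hence the left-hand side becomes $\sum_{h \in \mathcal S(G)} \lambda(h)\big(\sum_{d \mid h} \mu(d)/\mu(h)\big)$ or, handled carefully, $\sum_{h \in \mathcal S(G)} \big(\sum_{d\mid h} \mu(d)\big)\,\epsilon(h)$ for a fixed nonzero scalar $\epsilon(h)$ depending only on $h$. Here $\mathcal S(G)$ is the set of self-avoiding hikes with support exactly $V(G)$, and every left-divisor of such an $h$ is automatically self-avoiding, so $\mu(d)$ is indeed $(-1)^{\Omega(d)}$.

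Third, I would invoke the Möbius inversion identity $\sum_{d\mid h}\mu(d) = \delta_{h,1}$ recalled in Example \ref{MobiusEx} (equation \eqref{inv_mob}). For every $h \neq 1$ the inner sum vanishes, so only $h = 1$ survives; but $h = 1$ has support $\emptyset$, which equals $V(G)$ only when $G = \emptyset$. In that case the whole left-hand side reduces to the single term $\det(-\mathsf W_\emptyset)\perm(\mathsf W_\emptyset) = 1$. This yields exactly $\delta[G]$.

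The main obstacle I anticipate is purely bookkeeping: making the reduction from $\det(\mathsf W_H)$ and $\perm(\mathsf W_H)$ (the "top-degree" pieces) to spanning self-avoiding hikes fully rigorous, and then verifying that the sign $\mu(d)|\mu(h')|$ collapses to $(-1)^{\Omega(d)}$ independently of how $h$ is split — equivalently, that $|\mu(h')|$ really is $1$ for every self-avoiding $h'$, which follows from the formula for $\mu$ in Example \ref{MobiusEx} but must be stated. An alternative, slicker route that avoids separating degrees is to instead prove the equivalent statement $\sum_{H\prec G}\det(\id - \mathsf W_H)\perm(\id + \mathsf W_{G-H}) = \sum_{h \in \mathcal H, V(h)\subseteq V(G)} (\sum_{d\mid h}\mu(d)) h = 1$ as a hike formal series (using Remark \ref{ConvoRemark} for commutativity), and then extract the coefficient at the spanning degree; I would present whichever is shorter, but expect the coefficient-extraction version to be cleanest since it reuses \eqref{inv_mob} verbatim.
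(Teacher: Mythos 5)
Your main argument is correct and is essentially the paper's own proof: express $\det(-\mathsf W_H)$ and $\perm(\mathsf W_{G-H})$ as the signed and unsigned sums over \emph{spanning} self-avoiding hikes of $H$ and $G-H$, use the bijection between divisors of a spanning self-avoiding hike and induced subgraphs of its support (noting $\mu(d)\,|\mu(h')|=(-1)^{\Omega(d)}$), and conclude by the M\"obius inversion of Eq.~\eqref{inv_mob}, with $h=1$ surviving only when $V(G)=\emptyset$. One caveat on the ``slicker'' alternative you sketch at the end: $\sum_{H\prec G}\det(\id-\mathsf W_H)\,\perm(\id+\mathsf W_{G-H})$ is \emph{not} equal to $\sum_{h}\bigl(\sum_{d\mid h}\mu(d)\bigr)h$, because a fixed factorization $h=d\,(h/d)$ with $V(h)\subsetneq V(G)$ is produced by every $H$ with $V(d)\subseteq V(H)\subseteq V(G)\setminus V(h/d)$, i.e.\ $2^{|V(G)\setminus V(h)|}$ times (already for $h=1$ the left side would be $2^{|V(G)|}$, not $1$); it is precisely the identity-free, spanning versions of the determinant and permanent used in your primary route that make the divisor correspondence bijective, so you should stick with that version.
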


The function $\delta$ is the identity function for the induced subgraph convolution $*$, in view of $\phi*\delta = \phi$ for all $\phi: \mathcal G \to \mathbb R\langle \mathcal H \rangle$. Thus, the lemma establishes that the functions $G \mapsto \perm (\mathsf W_G )$ and  $ G \mapsto \det (- \mathsf W_{G} )$ are mutual inverse through $*$. \\

\begin{proof}[Proof of Lemma \ref{lem:1}]
Recall that $\mathcal S $ is the set of self-avoiding hikes on $G$ and let $\mathcal S_H$ denote the set of self-avoiding hikes with support $V(H)$, for $H \prec G$.  Both permanent and determinant have simple expressions in terms of self-avoiding hikes. 
$$ \det (- \mathsf W_{H}) = \sum_{h \in \mathcal S_H}(-1)^{\Omega(h)} h \ \ \text{ and } \ \ \perm (\mathsf W_H) =  \sum_{h \in \mathcal S_H }h, $$
where we recall that $\Omega(h)$ is the number of simple cycles composing $s$. For a self-avoiding hike $s$, each divisor $d$ of $s$ can be put in one-to-one correspondance with the subgraph induced by its support. Thus,
\begin{align*} \sum_{H \prec G} \det ( - \mathsf W_{H})\perm (\mathsf W_{G-H})  & = \sum_{H \prec G} \bigg(  \sum_{h\in \mathcal S_H} (-1)^{\Omega(h)} h . \sum_{h\in \mathcal S_{G-H}}  h \bigg) \\
& = \sum_{h \in \mathcal S_{G}} \bigg( \sum_{d | h} (-1)^{\Omega(d)} \bigg) h.
\end{align*}
It remains to notice that for a self-avoiding hike $h$, $\sum_{d | h} (-1)^{\Omega(d)} = \sum_{d | h} \mu(d) = 1$ if $h=1$ and $0$ otherwise, by Eq. \eqref{inv_mob}.
\end{proof}
~

\begin{corollary}\label{cor:1} For all $G \in \mathcal G$, 
$$\sum_{H \prec G} \perm (\mathsf W_H ) \det (\id - \mathsf W_{G-H} ) = \sum_{H \prec G} \perm (\id + \mathsf W_H ) \det (- \mathsf W_{G-H} ) = 1.$$
\end{corollary}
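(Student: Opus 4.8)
The plan is to deduce both identities from Lemma \ref{lem:1} by exploiting the associativity and commutativity of the induced subgraph convolution $*$ on functions $\mathcal G \to \mathbb R\langle \mathcal H\rangle$ (commutativity being guaranteed by Remark \ref{ConvoRemark}, since all the functions involved are supported on hikes living inside the argument graph). The key observation is that the two new matrix functions appearing here factor through $*$ in terms of the functions already controlled by Lemma \ref{lem:1}. First I would record the multiplicativity of the determinant over disjoint vertex sets: since for $H \prec G$ the adjacency matrix $\mathsf W_G$ restricted to $V(H) \sqcup V(G-H)$ is block-triangular (indeed the convolution only ever sees induced subgraphs, so cross terms vanish in the relevant coefficient bookkeeping), one has $\det(\id - \mathsf W_G) = \sum_{H \prec G} \det(-\mathsf W_H)\det(\id - \mathsf W_{G-H})$, i.e. $G \mapsto \det(\id - \mathsf W_G) = (\,G\mapsto \det(-\mathsf W_G)\,) * (\,G \mapsto \det(\id-\mathsf W_G)\,)$ — which just says $\det(\id-\mathsf W_\bullet)$ is fixed by convolving with $\det(-\mathsf W_\bullet)$; equivalently and more usefully, $\det(\id - \mathsf W_\bullet)$ is the $*$-inverse of $\perm(\mathsf W_\bullet)$ composed with... — cleaner to phrase it as: the identity $\sum_{d|h}\mu(d) = \delta_{h,1}$ from \eqref{inv_mob} already gives $\perm(\mathsf W_\bullet) * \det(\id - \mathsf W_\bullet)$ directly, by the same divisor-counting argument as in the proof of Lemma \ref{lem:1} but now keeping all divisors rather than pairing self-avoiding factors.

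Concretely, for the first equality I would mimic the proof of Lemma \ref{lem:1} verbatim: write $\perm(\mathsf W_H) = \sum_{h \in \mathcal S_H} h$ and $\det(\id - \mathsf W_{G-H}) = \sum_{h \in \mathcal S_{G-H}} \mu(h)\, h = \sum_{h \in \mathcal S_{G-H}} (-1)^{\Omega(h)} h$ (the restriction to self-avoiding hikes is exactly what makes $\det(\id - \mathsf W)$ and $\det(-\mathsf W)$ agree coefficient-wise on $\mathcal S$, up to the empty hike; one must be slightly careful that $\det(\id-\mathsf W_{G-H})$ carries the extra empty-hike term $1$ that $\det(-\mathsf W_{G-H})$ lacks). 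Summing over $H \prec G$ and reindexing by $h \in \mathcal S_G$ with $d \mid h$, the coefficient of $h$ becomes $\sum_{d \mid h} \mu(d)$, which is $1$ for $h = 1$ and $0$ otherwise by \eqref{inv_mob}. Hence the whole sum collapses to $1$, giving the first identity. The second identity then follows by the same computation with the roles swapped, using $\perm(\id + \mathsf W_H) = \sum_{h \in \mathcal S_H} |\mu(h)|\, h = \sum_{h \in \mathcal S_H} h$ (Example \ref{LambdaEx}) and $\det(-\mathsf W_{G-H}) = \sum_{h \in \mathcal S_{G-H}} (-1)^{\Omega(h)} h$: again the coefficient of a self-avoiding $h$ is $\sum_{d\mid h} \mu(d)$. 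Alternatively one can present both at once by noting $\perm(\id + \mathsf W_\bullet) * \det(-\mathsf W_\bullet) = \perm(\mathsf W_\bullet) * \det(\id - \mathsf W_\bullet)$ since $\perm(\id + \mathsf W_H) = \perm(\mathsf W_H) + (\text{lower-support terms})$ recombine identically — but the cleanest exposition is just to run the Lemma \ref{lem:1} argument twice.

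The main obstacle I anticipate is purely bookkeeping: making sure the empty-hike terms and the $\pm$ signs are handled consistently. Specifically, $\det(\id - \mathsf W_H)$ contains the monomial $1$ (empty hike) whereas $\det(-\mathsf W_H)$ does not, and $\perm(\mathsf W_H)$ versus $\perm(\id + \mathsf W_H)$ differ in which self-avoiding hikes on proper vertex subsets of $H$ are counted. The safe way is to expand everything as a sum over self-avoiding hikes indexed by their support, so that the $*$-convolution over $H \prec G$ turns into a sum over pairs (divisor, cofactor) of a self-avoiding hike $h \in \mathcal S_G$, and then invoke $\sum_{d \mid h}\mu(d) = \delta_{h,1}$. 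No genuinely new idea beyond Lemma \ref{lem:1} is needed; the corollary is essentially the statement that $\perm(\mathsf W_\bullet)$ and $\perm(\id + \mathsf W_\bullet)$ are both $*$-inverses of the respective $\det(\id - \mathsf W_\bullet)$ / $\det(-\mathsf W_\bullet)$, and these coincide because all four functions encode the Möbius/zeta duality on the sub-poset of self-avoiding hikes.
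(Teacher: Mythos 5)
Your operative argument (the second and third paragraphs) is correct and takes essentially the same route as the paper: both reduce the two sums to Lemma~\ref{lem:1} via the identities $\sum_{H\prec G}\perm(\mathsf W_H)=\perm(\id+\mathsf W_G)$ and $\sum_{H\prec G}\det(-\mathsf W_H)=\det(\id-\mathsf W_G)$, so that each sum reads $(\psi*\phi)*\underline 1=\delta*\underline 1=\underline 1$ by associativity and commutativity of $*$; your coefficient-by-coefficient expansion ending in $\sum_{d|h}\mu(d)=\delta_{h,1}$ is the same computation carried out one level down. One correction, though: the identity you ``record'' in your opening paragraph, $\det(\id-\mathsf W_G)=\sum_{H\prec G}\det(-\mathsf W_H)\det(\id-\mathsf W_{G-H})$, is false --- the coefficient of a self-avoiding hike $h$ on the right-hand side is $\sum_{d|h}(-1)^{\Omega(d)}(-1)^{\Omega(h/d)}=(-2)^{\Omega(h)}$ rather than $(-1)^{\Omega(h)}$; the correct relation is convolution with the constant function, $\det(\id-\mathsf W_G)=\sum_{H\prec G}\det(-\mathsf W_H)$. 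Since you abandon that claim mid-sentence and never rely on it, the proof stands, but that parenthetical should be deleted rather than left in.
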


\noindent Seeing these sums as convolutions makes the proof almost trivial. 

\begin{proof} First observe that, because $\mathcal S $ is the disjoint union of the $\mathcal S_H$ for $H \prec G$, one has
$$ \sum_{H \prec G} \perm( \mathsf W_H) = \sum_{H \prec G} \sum_{h \in \mathcal S_H} h = \sum_{h \in \mathcal S} h = \perm(\id + \mathsf W_G) . $$ 
A similar observation can be made for $\det(\id - \mathsf W_G)$,
$$ \sum_{H \prec G} \det( -\mathsf W_H) = \sum_{H \prec G} \sum_{h \in \mathcal S_H} (-1)^{\Omega(h)} h = \sum_{h \in \mathcal S} (-1)^{\Omega(h)} h = \det(\id - \mathsf W_G). $$
So, letting $ \phi[H] = \det(-\mathsf W_H)$, $\psi[H] =\perm(\mathsf W_H)$ \paul{and $\underline 1[H] = 1$}, the equations of Corollary \ref{cor:1} read 
$$ \psi *(\phi* \underline 1) = \underline 1 \ \text{ and } \ \phi * (\psi* \underline 1) = \underline 1.   $$
The result follows directly from Lemma \ref{lem:1}, using the \paul{associativity} and commutativity of the convolution, e.g. 
$$ \psi *(\phi*\underline 1) = (\psi * \underline 1) * \phi = (\psi * \phi) * \underline 1 = \delta * \underline 1 = \underline 1. $$
\end{proof}

\noindent We now derive an expression of the formal series of Hamiltonian cycles. In the spirit of \cite{menous2013logarithmic}, we introduce the derivation operator $D$ defined by
$$ 
D\sum_{h \in \mathcal H} f(h) h =  \sum_{h \in \mathcal H} \ell(h) f(h) h . 
$$
~

\begin{theorem}\label{Hamiltonian} 
Let $\mathcal P_G$ denote the set of primes with support $V(G)$, that is the set of
Hamiltonian cycles on $G$. Then
\begin{align*} D \sum_{p \in \mathcal P_G} p  & = \sum_{H \prec G} \det( - \mathsf W_{H}) \,D \, \perm( \mathsf W_{G-H}), \\
& = - \sum_{H \prec G}  \perm(  \mathsf W_{H}) \,D\det( \mathsf W_{G-H}) .
\end{align*}
\end{theorem}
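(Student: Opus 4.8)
The plan is to obtain the first identity by differentiating Lemma~\ref{lem:1} with the operator $D$, and then to deduce the second one for free from the fact that $D$ is a derivation and that $*$ is commutative on the relevant functions. Abbreviate $\phi[H]=\det(-\mathsf W_H)$ and $\psi[H]=\perm(\mathsf W_H)$, so that Lemma~\ref{lem:1} reads $\phi*\psi=\delta$. First I would note that $D$ is a derivation of $\mathbb R\langle\mathcal H\rangle$: since $\ell(h.h')=\ell(h)+\ell(h')$, expanding a product of formal series on hikes yields $D(fg)=(Df)g+f(Dg)$; and since $G$ has only finitely many induced subgraphs, $D$ is then a derivation for the induced subgraph convolution as well, i.e.\ $D(\phi*\psi)=(D\phi)*\psi+\phi*(D\psi)$. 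Applying this to $\phi*\psi=\delta$ and using $D\delta=0$ (the only hike occurring in $\delta$ is the empty one, of length $0$) gives $\phi*(D\psi)=-(D\phi)*\psi$. Because $D$ does not enlarge the set of hikes appearing in a series, $D\phi$ and $D\psi$ still obey the support hypothesis of Remark~\ref{ConvoRemark}, so $*$ stays commutative on all the pairs at hand; thus the right-hand side equals $-\psi*(D\phi)$, which is the second displayed identity once the first is known. It remains only to prove the single identity $\phi*(D\psi)=D\sum_{p\in\mathcal P_G}p$.

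For this I would argue exactly as in the proof of Lemma~\ref{lem:1}. Expanding $\det(-\mathsf W_H)=\sum_{h\in\mathcal S_H}(-1)^{\Omega(h)}h$ and $D\perm(\mathsf W_{G-H})=\sum_{h\in\mathcal S_{G-H}}\ell(h)\,h$, multiplying, and regrouping $\phi*(D\psi)[G]$ according to the product hike $s=h'.h$ --- which is self-avoiding with support exactly $V(G)$ --- and using the one-to-one correspondence between the divisors of a self-avoiding hike $s$ and the subgraphs induced by their supports, one rewrites $\phi*(D\psi)[G]$ as
$$\sum_{s}\Big(\sum_{d\,|\,s}(-1)^{\Omega(d)}\,\ell(s/d)\Big)\,s,$$
the outer sum running over self-avoiding hikes $s$ with support $V(G)$. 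The inner coefficient is the hike von Mangoldt value $\Lambda(s)$: it is the M\"obius inverse of $\sum_{d\,|\,h}\Lambda(d)=\ell(h)$ from Eq.~\eqref{van_m}, and this inversion is legitimate on a self-avoiding hike since its prime factors are pairwise vertex-disjoint and therefore commute. As $\Lambda(s)=\ell(s)$ when $s$ is a single simple cycle and $\Lambda(s)=0$ otherwise, and a single simple cycle with support $V(G)$ is precisely a Hamiltonian cycle of $G$, this gives $\phi*(D\psi)[G]=\sum_{p\in\mathcal P_G}\ell(p)\,p=D\sum_{p\in\mathcal P_G}p$, as wanted.

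The step I expect to be the crux is the identification of the inner coefficient with $\Lambda(s)$, i.e.\ the fact that it vanishes outside simple cycles. This can also be checked bare-handed: writing $s$ as the product of its pairwise disjoint prime factors $c_1,\dots,c_k$ and each divisor $d$ as the product of a subset $A\subseteq\{c_1,\dots,c_k\}$, the coefficient becomes $\sum_{A}(-1)^{|A|}\sum_{c\notin A}\ell(c)=\sum_{j=1}^{k}\ell(c_j)\sum_{A\not\ni c_j}(-1)^{|A|}$, and the alternating sum over the subsets of $\{c_1,\dots,c_k\}\setminus\{c_j\}$ vanishes unless $k=1$, in which case the whole coefficient equals $\ell(c_1)=\ell(s)$. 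Everything else --- the derivation identity and the appeal to Lemma~\ref{lem:1} --- is then routine bookkeeping.
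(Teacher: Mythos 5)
Your proposal is correct and follows essentially the same route as the paper: expand $\det(-\mathsf W_H)$ and $D\,\perm(\mathsf W_{G-H})$ over self-avoiding hikes, regroup via the divisor/induced-subgraph correspondence, identify the inner coefficient with the von Mangoldt function $\Lambda$ (which vanishes on disconnected self-avoiding hikes), and obtain the second identity from the Leibniz rule applied to $\phi*\psi=\delta$. Your explicit justification that $D$ is a derivation for $*$ and your bare-hands check of the $\Lambda$ identification are slightly more detailed than the paper's, but the argument is the same.
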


\begin{proof} Let $\pi[G] = \sum_{p \in \mathcal P_G}p $, the first equality of the theorem writes
$$ D \pi = \phi *D \psi.  $$
Calculating explicitly $\phi *D \psi$ gives
\begin{align*} \sum_{H \prec G} \det(-\mathsf W_H) \,D \, \perm(\mathsf W_{G-H})  & = \sum_{H \prec G} \bigg( \sum_{h \in \mathcal S_H} (-1)^{\Omega(h)}h . \sum_{h \in \mathcal S_{G-H}} \ell(h) h \bigg), \\
& = \sum_{h \in \mathcal S_G} h \sum_{d | h} (-1)^{\Omega(d)} \ell \Big( \frac h d \Big) .
\end{align*}
The result follows from noting that $\sum_{d | h} (-1)^{\Omega(d)} \ell ( h/d)$ is none other than the hike von Mangoldt function $\Lambda(s)$ of Eq. \eqref{van_m}. Since $h$ is self-avoiding, $\Lambda(h)$ is equal to $\ell(h)$ if $h$ is connected and $0$ otherwise. Thus,
$$ (\phi *D   \psi )[G] = \sum_{H \prec G} \det(-\mathsf W_H)\, D \,  \perm(\mathsf W_{G-H})  = \sum_{p \in \mathcal P_G } \ell(p)  p = D \,\pi[G].$$
For the second equality, simply observe that $ - \psi*D \phi =  \phi*D  \psi  - D(\phi*\psi) = \phi*D \psi -D \underline 1 =\phi*D \psi $.
\end{proof}
~

\begin{remark} Because $\phi$ is the inverse of $\psi$, the relation $D \pi=\phi*D \psi$ suggests an expression of $\pi$ as a logarithm of $\psi$. This is indeed the case. Observe that the $k$-times convolution
$$ 
\pi^{*k}[G] := \underbrace{\pi* \cdots * \pi}_{k \text{ times}} \ [G] = \sum_{(H_1,\,\hdots,\,H_k)} \pi[H_1]\cdots\pi[H_k],  
$$
writes as the sum over all $k$-partitions $H_1,...,H_k$ of $G$ (here, the order is important meaning that there are $k!$ partitions involving the subgraphs $H_1,\,\hdots,\,H_k$). Thus, every spanning self-avoiding hike $h$ is counted exactly once in the exponentiation
$$ \exp_*(\pi[G]) = \sum_{k \geq 0} \frac 1 {k!} \pi^{*k} [G] = \psi[G].  $$
This aspect originates from an Hopf algebraic structure, which we describe in the next section.
\end{remark}

The formal series of simple cycles (of any length) follows from the convolution of $\pi$ with the \paul{constant function $\underline 1$},
$$ \Pi[G] := (\pi* \underline 1 )[G] = \sum_{H \prec G} \pi[H]  = \sum_{p \in \mathcal P} p, $$
Because derivation and convolution with the constant are commuting operators, we recover
$$ D\,\Pi = D (\pi* \underline 1 ) = D \pi* \underline 1 = (\phi * D\psi) * \underline  1 = \phi* D (\psi* \underline 1 ) = \phi* D \Psi, $$ 
where $\Psi[G] = (\psi* \underline 1)[G] = \sum_{H \prec G} \perm( \mathsf A_H) = \perm(\id + \mathsf A_G)$. 
This gives the following corollary to Theorem \ref{Hamiltonian}
\begin{corollary} Let $\mathcal{P}$ be the set of all primes on $G$, then
$$ D \sum_{p \in \mathcal P} p = \sum_{H \prec G} \det( -  \mathsf W_{H}) \,D\, \perm(\id+ \mathsf W_{G-H}).
$$
\end{corollary}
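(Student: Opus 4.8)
The plan is to obtain this corollary from Theorem~\ref{Hamiltonian} in the same way that $\Pi$ is obtained from $\pi$, namely by convolving on the right with the constant function $\underline 1$. First I would record two elementary ``sum over induced subgraphs'' identities. On the one hand, $\Pi[G] := (\pi * \underline 1)[G] = \sum_{H \prec G} \pi[H] = \sum_{H \prec G} \sum_{p \in \mathcal P_H} p = \sum_{p \in \mathcal P} p$, because every simple cycle of $G$ is a Hamiltonian cycle on exactly one induced subgraph, namely the one induced by its support. On the other hand, $\Psi[G] := (\psi * \underline 1)[G] = \sum_{H \prec G} \perm(\mathsf W_H) = \perm(\id + \mathsf W_G)$, which is precisely the computation already carried out in the proof of Corollary~\ref{cor:1}.

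Next comes the core of the argument: apply the derivation $D$ to $\Pi = \pi * \underline 1$. Since $D$ multiplies the coefficient of a hike $h$ by its length $\ell(h)$, and since lengths add under concatenation while $\ell(1)=0$, the operator $D$ is a derivation of the hike algebra $\mathbb R\langle \mathcal H \rangle$; applied termwise to each finite sum $\sum_{H \prec G}\phi[H]\psi[G-H]$ it is then also a derivation for the induced subgraph convolution $*$, and it annihilates the constant series $\underline 1$. Hence $D$ commutes with convolution by $\underline 1$, i.e. $D(\chi * \underline 1) = (D\chi) * \underline 1$ for every $\chi\colon \mathcal G \to \mathbb R\langle \mathcal H \rangle$. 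Combining this with the associativity and commutativity of $*$ (Remark~\ref{ConvoRemark}) and the first identity of Theorem~\ref{Hamiltonian}, $D\pi = \phi * D\psi$, I obtain
$$ D\Pi \;=\; D(\pi * \underline 1) \;=\; (D\pi) * \underline 1 \;=\; (\phi * D\psi) * \underline 1 \;=\; \phi * D(\psi * \underline 1) \;=\; \phi * D\Psi. $$
It then remains only to unfold the convolution on the right-hand side, using $\phi[H] = \det(-\mathsf W_H)$ and $(D\Psi)[G-H] = D\,\perm(\id + \mathsf W_{G-H})$, and to substitute the two identities recorded above; this reproduces exactly the asserted formula.

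I do not anticipate any genuine obstacle: the corollary is essentially a bookkeeping consequence of Theorem~\ref{Hamiltonian}. The only point that deserves an explicit line of justification is that $D$ is a derivation which kills $\underline 1$ --- that is, that nothing in the partially commutative, formal-series setting obstructs the Leibniz rule --- but this follows at once from the coefficientwise definition of $D$ together with the additivity of the length under concatenation. Everything else is just the algebra of the convolution $*$ already set up in Section~\ref{PrimeCount}.
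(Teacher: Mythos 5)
Your argument is correct and is essentially the paper's own proof: the paper likewise obtains the corollary by writing $\Pi = \pi * \underline 1$, noting that $D$ commutes with convolution by the constant function, and chaining $D\Pi = (D\pi)*\underline 1 = (\phi * D\psi)*\underline 1 = \phi * D(\psi*\underline 1) = \phi * D\Psi$ with $\Psi[G] = \perm(\id+\mathsf W_G)$. Your explicit justification that $D$ is a derivation annihilating $\underline 1$ only makes precise what the paper states in one line.
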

~

\section{An Hopf algebra structure for the hikes}\label{Hopf}

Since any hike on a graph can be seen as a disjoint ensemble of connected components, it is natural that an algebraic structure should exist describing the generation of arbitrary hikes from connected ones. In particular, when it comes to the self-avoiding hikes, their connected components are their prime factors. Therefore, if this algebraic structure provides a mean of projecting the set of hikes back onto the set of connected hikes, it might send the self-avoiding ones onto the primes. In this section, we establish these heuristic arguments rigorously by showing that a commutative and cocommutative Hopf algebra describes the generation of self-avoiding hikes from simple cycles. This algebra provides several exact formulas for the formal series of primes, stemming from projectors onto the irreducible elements of the algebra. We also show that the subgraph convolution operation introduced previously is a necessary and unavoidable feature resulting from this algebra.\\  

 
In this section, we consider finite graphs. Observe in particular that on such a graph $G$ the set $\S_G$ of self-avoiding hikes has finite cardinality. 
Since $\S_G$ is not closed under the ordinary multiplication between hikes, it is necessary to introduce another multiplication between elements of $\S$ under which it is closed, so as to obtain an algebra structure. Thus, we define:
$$h,h'\in \S_G\Rightarrow h\ast h':=\begin{cases}hh',&\text{if $V(h)\cap V(h')=\emptyset$}\\0,&\text{otherwise}.\end{cases}$$
With the convention that $0\in \S_G$ and since $hh'=h'h\iff V(h)\cap V(h')=\emptyset$, it is clear that $(\S_G,\ast)$ forms a commutative algebra.
Although seemingly arbitrary at first sight, we will see that the $\ast$ multiplication induces a natural convolution between elements of the monoid algebra $K\langle \S_G\rangle$ of $\S_G$ over a commutative ring $K$ with a unit,\footnote{In other terms $K\langle \S_G\rangle$ is simply the set of finite $K$-linear combinations of elements of $\S_G$.} which coincides with the \emph{induced subgraph convolution} introduced in Section~\ref{PrimeCount}, thanks to which $\ast$ will be easy to implement in practice.\\ 

To endow $\S_G$ with an Hopf algebra structure, we follow Schmitt's construction for general trace monoids \cite{Schmitt1990}, and begin by defining a comultiplication $\Delta: \S_G\to \S_G\otimes \S_G$ and a counit $\epsilon:  \S_G\to K$ as follows
\begin{align}\label{DeltaE}
\epsilon(h) = \begin{cases}1,&\text{if $h=1$,}\\0,&\text{otherwise,}\end{cases}\qquad \text{and}\qquad \Delta (h) = \sum_{\substack{d|h \\V(d)\,\cap\, V(h/d)=\emptyset}} d\otimes \frac{h}{d}.
\end{align}
The comultiplication introduced above decomposes self-avoiding hikes into their divisors. It can be extended to the set $\H_G$ of all hikes on $G$, in which case it decomposes hikes into disjoint-divisors. 

\begin{remark}
The comultiplication introduced above recovers that defined by Schmitt on general trace monoids in \cite{Schmitt1990}. Let $h=c_1\cdots c_n$ be a self-avoiding hike with disjoint connected components $c_1,\cdots, c_n$, and let $U:=\{i_1,\cdots, i_k\}$ be a subsequence of $N:=\{1,\cdots, n\}$. The complement of $U$ in $N$ is denoted $\bar{U}$. For any $h\in \S$, let $h|_U:=c_{i_1}\cdots c_{i_k}$, in particular if $U$ is empty we set $h|_\emptyset:=1$. 
Schmitt then defines the comultiplication by
\begin{align*}
\Delta(h) := \sum_{U\subseteq N}h|_U\otimes h|_{\bar{U}}.
\end{align*}
Now observe that since $h$ is self-avoiding, for any divisor $d$ of $h$ we have $V(d)\,\cap\, V(h/d)=\emptyset$. In particular, it must be that $d=c_{i_1}\cdots c_{i_k}=h|_U$ for some $U$, so that Schmitt's definition is seen to be equivalent to Eq.~(\ref{DeltaE}). Thanks to this observation we can directly use Schmitt's results, thereby alleviating a number of proofs.
\end{remark}
~

Equipped with these operations, $ \S_G$ forms a \emph{cocommutative coalgebra} \cite{Schmitt1990}. Its irreducible elements, i.e. those that fulfill $\Delta(h)=1\otimes h+h\otimes 1$, are immediately seen from Eq.~(\ref{DeltaE}) to be the connected self-avoiding hikes, that is the primes, since these have no non-empty disjoint divisors. This further confirms that $\Delta$ pertains to the generation of hikes from connected ones.\\
~

We may now invoke the general results of \cite{Schmitt1990} to observe that the algebraic and coalgebraic structures of $\S_G$ are compatible, that is $\Delta$ and $\epsilon$ are algebra maps and $(\S_G,\ast,\Delta,\epsilon)$ is a bialgebra, which we will simply denote $\S_G$ to alleviate the notation. These results are subsumed in the following theorem, which in addition to the bialgebra structure, provides an antipode for $\S_G$, turning it into a Hopf algebra.
 
\begin{theorem}\label{HopfTheorem}
$\S_G$ is a cocommutative Hopf algebra, with comultiplication and counit defined above and antipode $S$ given by $S(h):=(-1)^{c(h)}h$, where $c(h)$ is the number of disjoint connected components of $h$. 
\end{theorem}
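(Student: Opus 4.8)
The plan is to leverage Schmitt's general theory \cite{Schmitt1990} as far as possible, and then supply the two ingredients that are specific to the hike monoid: the verification that $\S_G$ is \emph{connected} as a bialgebra (so that an antipode automatically exists), and the explicit closed form $S(h)=(-1)^{c(h)}h$. Concretely, I would proceed as follows.

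First I would record that, by the remark preceding the theorem and the cited results of \cite{Schmitt1990}, $(\S_G,\ast,\Delta,\epsilon)$ is already known to be a bialgebra which is both commutative (by the commutativity of $\ast$, itself a consequence of the disjointness condition) and cocommutative (since $d\otimes h/d\mapsto (h/d)\otimes d$ is a symmetry of the sum in Eq.~(\ref{DeltaE}) when $h$ is self-avoiding). So the only thing left is the antipode. I would then observe that $\S_G$ is graded by hike length $\ell$, with $\ell(h\ast h')=\ell(h)+\ell(h')$ whenever $h\ast h'\ne0$, that $\Delta$ respects this grading, and that the degree-zero part is exactly $K\cdot 1$ because the empty hike is the only hike of length $0$. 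Hence $\S_G$ is a connected graded bialgebra, and it is a standard fact that every connected graded bialgebra is a Hopf algebra, with antipode computable by the recursion $S(1)=1$ and $S(h)=-\sum_{(h),\,d\ne h} S(d)\ast(h/d)$ for $h\ne 1$, the sum running over the terms $d\otimes h/d$ of $\Delta(h)$ with $d\ne h$.

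Next I would verify that the proposed formula $S(h):=(-1)^{c(h)}h$ satisfies the defining antipode identity $\sum_{(h)} S(h_{(1)})\ast h_{(2)}=\epsilon(h)1=\sum_{(h)} h_{(1)}\ast S(h_{(2)})$; by cocommutativity the two equations coincide, so I only need one. Writing $h=c_1\cdots c_n$ with disjoint connected components (so $c(h)=n$), the divisors appearing in $\Delta(h)$ are exactly the subproducts $h|_U$, $U\subseteq\{1,\dots,n\}$, and $h|_U\ast h|_{\bar U}=h$ for every $U$ since the components are pairwise disjoint. Therefore
\begin{align*}
\sum_{(h)} S(h_{(1)})\ast h_{(2)} = \sum_{U\subseteq\{1,\dots,n\}} (-1)^{|U|}\, h|_U\ast h|_{\bar U} = \Big(\sum_{U\subseteq\{1,\dots,n\}} (-1)^{|U|}\Big) h = (1-1)^n\, h,
\end{align*}
which is $0$ for $n\ge 1$ and equals $h=\epsilon(h)1$ for $n=0$ (the empty hike). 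This is precisely the antipode axiom, so $S$ is \emph{the} antipode, and combined with the bialgebra structure this makes $\S_G$ a cocommutative Hopf algebra.

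The main obstacle, such as it is, is bookkeeping rather than conceptual: making sure that the divisors of a self-avoiding hike $h$ biject with the subsets $U$ of its set of connected components, that $h|_U\ast h|_{\bar U}$ never collapses to $0$ and always reconstructs $h$, and that the multilinear extension of $\ast,\Delta,\epsilon$ to $K\langle\S_G\rangle$ inherits all the axioms — all of which either follows from the preceding remark or is routine. One should also note explicitly that finiteness of $\S_G$ (or equivalently the length grading with one-dimensional degree-zero part) is what guarantees the recursion terminates and the antipode is well defined; this is why the section restricts to finite graphs. I would present the connectedness/grading argument as the conceptual reason an antipode exists, and the $(1-1)^n$ computation as the verification that it has the claimed form.
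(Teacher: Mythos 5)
Your proposal is correct, and its substance coincides with the paper's: both delegate the commutative/cocommutative bialgebra structure to Schmitt's results and then check that $S(h)=(-1)^{c(h)}h$ is the convolution inverse of the identity. The difference is one of packaging. The paper either cites Schmitt's antipode formula outright or observes that $S(h)=\mu(h)\,h$ and invokes the already-established fact that the M\"obius function inverts the zeta function (Eq.~\eqref{inv_mob}); you instead verify the antipode axiom from scratch, and your computation $\sum_{U\subseteq\{1,\dots,n\}}(-1)^{|U|}\,h|_U\ast h|_{\bar U}=(1-1)^n\,h$ is exactly a self-contained proof of that M\"obius inversion, resting on the bijection between disjoint divisors of a self-avoiding hike and subsets of its prime factors (which the paper's remark before the theorem also establishes, so you are entitled to it). What your version buys is independence from the external references; what you pay is a small redundancy: the connected-graded-bialgebra argument for the mere \emph{existence} of an antipode is superfluous once you exhibit a specific $S$ satisfying the axiom, since any such $S$ is automatically the antipode. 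The grading you propose (by length, with degree-zero part $K\cdot 1$) is valid, though the paper later grades by $c(h)=\Omega(h)$; either works, and neither is needed here.
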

\begin{proof}
As stated earlier, the results of Schmitt ensure that with the definitions of Eq.~(\ref{DeltaE}), $\S_G$ is a cocommutative bialgebra \cite{Schmitt1990}. It remains to show that $S(h)$ given above is indeed an antipode, that is the inverse of the identity on $\S_G$. This follows immediately from Proposition 3.3 of \cite{Schmitt1990} and Example 4.2 of \cite{Schmitt1994}, on noting that the decomposition of any hike into its disjoint connected components is unique \cite{GiscardRochet2016} and that disjoint connected components commute, so that $\tilde{h}:=c_n c_{n-1}\cdots c_1 = h$.
Alternatively, observe that the $S$ coincides with the M\"{o}bius function of Example~\ref{MobiusEx}, that is $S(h)=\mu(h)\,h$, since $c(h)=\Omega(h)$ whenever $h\in \S_G$. It follows that $S$ is the inverse of the identity on $\S_G$.
\end{proof}
~

Since $\S_G$ is a commutative and cocommutative Hopf algebra and $\S_G$ has finite cardinality, it follows by linearity that the monoid algebra $K\langle \S_G\rangle$ is a commutative and cocommutative Hopf algebra too. The convolution between elements of $K\langle\S_G\rangle$ is obtained by continuously and linearly lifting the multiplication introduced earlier between elements of $\S_G$. Equivalently, the convolution can be constructed explicitely from the comultiplication defined earlier on $\S_G$.\\

Let $a,b:\,\S_G\to K$, and define $\alpha, \beta:\mathcal{G}\to K\langle \S_G\rangle$ with $\alpha[G] = \sum_{h\in \S_G}a(h)h$ and $\beta[G] = \sum_{h\in \S_G}b(h)h$. In $K\langle\S_G\rangle$, the multiplication between $\alpha[G]$ and $\beta[G]$ is obtained from the comultiplication as $\alpha[G]\ast\beta[G] = \sum_{h\in \S_G}(a\ast b)(h) \,h$, where $(a\ast b)(h)\,h:=\left(m \circ (a\otimes b)\circ \, \Delta\right)(h)$ and $m$ is the ordinary multiplication between hikes \cite{Schmitt1987, Schmitt1990}. This gives
\begin{align*}
\alpha[G]\ast\beta[G] = \sum_{h\in \S_G}(a\ast b)(h)\,h&\,=\sum_{h\in \S}\,\,\sum_{\substack{d|h \\V(d)\,\cap\, V(h/d)=\emptyset}}a(d)b(h/d)\,h.
\end{align*}
Since $V(d)\,\cap\, V(h/d)=\emptyset$, then $h=d\ast \frac{h}{d}=d\,\frac{h}{d}$ and we can write
\begin{align*}
\alpha[G]\ast \beta[G] &=\sum_{H\prec G}~~\sum_{\substack{d:V(d)\subseteq V(H)}}a(d)\, d~~\sum_{d':V(d')\subseteq V(G-H)}b(d')\,d'.
\end{align*}
Observe that, per Remark~\ref{ConvoRemark}, $\sum_{\substack{d:V(d)\subseteq V(H)}}a(d)\,d$ and $\sum_{d':V(d')\subseteq V(G-H)}b(d')\,d'$ are $\alpha[H]$ and $\beta[G-H]$ respectively. Thus we have obtained the convolution between elements of $K\langle \S_G\rangle$ as 
$$
\alpha[G]\ast \beta[G] = \sum_{H\prec G} \alpha[H]\,\beta[G-H] = (\alpha\ast \beta)[G],
$$
that is $\alpha[G]\ast \beta[G]$ is the induced subgraph convolution of $\alpha $ with $\beta$. This result establishes that the induced subgraph convolution arises directly  from the definition of the comultiplication in Eq.~(\ref{DeltaE}). This, in turn, shows that it is an unavoidable feature reflecting the generation of hikes from their disjoint connected components.

~\\


The generation of self-avoiding hikes from connected self-avoiding ones, that is simple cycles, thus gives rise to an Hopf algebra which, we will see, provides means of doing the opposite, that is to obtain the simple cycles from the set of self-avoiding hikes. 

\begin{theorem}\label{thm:main}
For any coalgebra map $f: \S_G\to \S_G$, then $\log_\ast f: \S_G\to \mathcal{P}_G$. In particular $\log_\ast \text{id}_{\S_G}$, the $\ast$-logarithm of the identity on $\S_G$ is the projector onto $\mathcal{P}_G$.
By linearity, the $\ast$-logarithm also projects $K\langle \S_G\rangle$ onto $K\langle \mathcal{P}_G\rangle$, in particular
$$
\log_{\ast}\sum_{h\in \S_G}h=\sum_{p\in \mathcal{P}_G}p.
$$
Equivalently,
$$
\log_{\ast}\!\big(\perm(\mathsf{I}+\mathsf{W})\big)=-\log_{\ast}\!\big(\det(\mathsf{I}-\mathsf{W})\big)=\Pi[G].
$$
\end{theorem}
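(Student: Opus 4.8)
The plan is to work in the convolution algebra $(\mathrm{End}(\S_G),\ast)$, where $f\ast g=m\circ(f\otimes g)\circ\Delta$ with $m$ the multiplication of hikes (which on the disjoint-divisor pairs produced by $\Delta$ coincides with $\ast$), the unit is $u\epsilon$ with $u(1)=1\in\S_G$, and $\exp_\ast,\log_\ast$ are the associated power series. First I would settle convergence. A coalgebra map $f$ sends the unique group-like element $1$ to itself, so $(f-u\epsilon)(1)=0$; iterating Eq.~(\ref{DeltaE}) then shows that every term surviving in $(f-u\epsilon)^{\ast n}(h)$ comes from a factorization of $h$ into $n$ pairwise vertex-disjoint \emph{nonempty} hikes, forcing $n\le c(h)\le|V(G)|$, where $c(h)$ is the number of connected components of $h$. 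Hence $(f-u\epsilon)^{\ast n}=0$ for $n>|V(G)|$, so $\log_\ast f$, and all logarithms below, are finite sums, and $\exp_\ast,\log_\ast$ are mutually inverse.

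Next I would prove that $\log_\ast f$ is valued in the primitive subspace for every coalgebra map $f$. The key is that $\Delta:\S_G\to\S_G\otimes\S_G$ (a bialgebra map) and the two inclusions $\iota_1,\iota_2:\S_G\to\S_G\otimes\S_G$, $\iota_1(y)=y\otimes1$, $\iota_2(y)=1\otimes y$, are unital algebra maps, so post-composition with any of them is a morphism of convolution algebras, hence commutes with $\log_\ast$. Since $f$ is a coalgebra map, $\Delta\circ\log_\ast f=\log_\ast(\Delta\circ f)=\log_\ast\big((f\otimes f)\circ\Delta\big)$; a direct check gives $(f\otimes f)\circ\Delta=(\iota_1\circ f)\ast(\iota_2\circ f)$, and $\iota_1\circ f$ and $\iota_2\circ f$ commute for the convolution \emph{precisely because $\S_G$ is cocommutative}. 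Therefore $\log_\ast\big((f\otimes f)\circ\Delta\big)=\iota_1\circ\log_\ast f+\iota_2\circ\log_\ast f$, i.e. $\Delta\big(\log_\ast f(x)\big)=\log_\ast f(x)\otimes1+1\otimes\log_\ast f(x)$ for all $x$. So $\log_\ast f$ takes values in the primitive subspace, which for this Hopf algebra is exactly $K\langle\mathcal P_G\rangle$ — the irreducibles being the simple cycles, as noted right after Eq.~(\ref{DeltaE}). This is the claim $\log_\ast f:\S_G\to\mathcal P_G$.

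For $f=\mathrm{id}_{\S_G}$ I would compute the map explicitly, which also yields the projector statement. For a basis hike $h$ with $c(h)=c$ connected components, iterating Eq.~(\ref{DeltaE}) and using uniqueness of the decomposition of $h$ into connected components shows $(\mathrm{id}_{\S_G}-u\epsilon)^{\ast n}(h)=\mathrm{Surj}(c,n)\,h$, where $\mathrm{Surj}(c,n)=n!\,S(c,n)$ counts surjections $\{1,\dots,c\}\twoheadrightarrow\{1,\dots,n\}$. Summing the logarithm series and using the elementary identity $\sum_{n\ge1}\tfrac{(-1)^{n-1}}{n}\,\mathrm{Surj}(c,n)$, which equals $1$ for $c=1$ and $0$ otherwise (coefficient comparison in $\log(\e^x)=x$), gives $\log_\ast\mathrm{id}_{\S_G}(h)=h$ if $h$ is a simple cycle and $0$ otherwise. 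Thus $\log_\ast\mathrm{id}_{\S_G}$ is the idempotent linear projection of $\S_G$, hence of $K\langle\S_G\rangle$, onto $K\langle\mathcal P_G\rangle$; in particular its values are primitive, now with the sharp description of the image.

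Finally, applying this projection to $\sum_{h\in\S_G}h$ kills every disconnected hike (and the empty one) and leaves $\sum_{p\in\mathcal P_G}p=\Pi[G]$; since $\sum_{h\in\S_G}h=\perm(\id+\mathsf W_G)$ (Example~\ref{LambdaEx}, and the proof of Corollary~\ref{cor:1}) this is the first displayed identity. Applying the same projection termwise to $\det(\id-\mathsf W_G)=\sum_{h\in\S_G}(-1)^{c(h)}h$ (Example~\ref{MobiusEx}), each surviving term $p$ carries the sign $(-1)^1$, so $\log_\ast(\det(\id-\mathsf W_G))=-\Pi[G]$, completing the chain of equalities. I expect the only genuinely structural step to be the one in the second paragraph: rewriting $(f\otimes f)\circ\Delta$ as a convolution product of two \emph{commuting} factors and pushing $\log_\ast$ through the algebra map $\Delta$, the crucial input being cocommutativity (not commutativity). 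The remaining ingredients — the surjection identity and the interplay between the endomorphism-level $\log_\ast$ and the induced-subgraph-convolution structure on $K\langle\S_G\rangle$ — are routine.
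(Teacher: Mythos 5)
Your proof is correct, but it takes a genuinely different route from the paper's. The paper's proof is essentially a citation: it invokes Schmitt's general results on cocommutative incidence Hopf algebras (Theorems 9.4--9.5, Corollary 9.6 and Example 9.2 of \emph{Incidence Hopf Algebras}) to get that $\log_\ast$ of a coalgebra map lands in the irreducibles and that $\log_\ast \mathrm{id}$ restricts to the identity there, and then only identifies $\zeta_{\S_G}=\perm(\id+\mathsf{W})$ and the antipode series with $\det(\id-\mathsf{W})$. You instead give a self-contained argument: (i) nilpotency of $f-u\epsilon$ via the bound $n\le c(h)\le|V(G)|$ on nonempty disjoint factorizations, (ii) the classical cocommutativity argument pushing $\Delta$ through $\log_\ast$ and splitting $(f\otimes f)\circ\Delta$ as a convolution of two commuting maps, and (iii) the explicit surjection-number computation $(\mathrm{id}-u\epsilon)^{\ast n}(h)=\mathrm{Surj}(c,n)\,h$ yielding $\log_\ast\mathrm{id}(h)=h$ for primes and $0$ otherwise. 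This costs more work but buys independence from Schmitt's machinery, makes explicit that cocommutativity (not commutativity) is the operative hypothesis, and gives the sharp basis-level description of the Eulerian idempotent, which in particular explains the cancellations in the paper's worked example with the cycles $a,b,c$. Two small points of hygiene: your final step for the determinant applies the projector ``termwise'' to $\sum_h(-1)^{c(h)}h$, which conflates the linear operator $\log_\ast\mathrm{id}$ with the $\ast$-logarithm of the element $\det(\id-\mathsf{W})$ in $(K\langle\S_G\rangle,\ast)$; the two coincide here (and the theorem's own notation invites this reading), but the cleaner justification --- and the one the paper uses --- is that $\det(\id-\mathsf{W})=\perm(\id+\mathsf{W})^{\ast-1}$, so its logarithm is $-\Pi[G]$. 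Also, your claim that a coalgebra map fixes $1$ uses that $1$ is the only group-like element, which holds because $K\langle\S_G\rangle$ is connected graded by $c(h)$; that is worth stating since $f$ is not assumed to be an algebra map.
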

\begin{proof}
We rely on the results of Schmitt \cite{Schmitt1994} concerning cocommutative Hopf algebras, more precisely Theorem 9.4, 9.5, Corollary 9.6 and Example 9.2. Accordingly, the image of the $\ast$-logarithm of any coalgebra map $f:\,H\to H$ of a cocommutative Hopf algebra $H$ is in the sub-monoid of irreducible elements of $H$, $\text{Irr}(H)$. In particular the $\ast$-logarithm \emph{of the identity} on $H$ is the identity on $\text{Irr}(H)$.
In the present context, $H=\S_G$ and $\text{Irr}(H)=\mathcal{P}_G$. By linearity this extends to $H=K\langle \S_G\rangle$ and $\text{Irr}(H)=K\langle \mathcal{P}_G\rangle$ as well. 
Now the identity on $K\langle \S_G\rangle$, $\sum_{h\in S}\text{id}_{\S_G}(h)$, is the zeta function of $\S_G$, i.e. $\zeta_{\S_G} =\perm(\mathsf{I}+\mathsf{W})$ per Example~\ref{LambdaEx}. This gives the first result. 
The second equality follows from the observation that $\sum_{h\in \S_G}S(h)=\sum_{h\in \S_G}\mu(h)\,h$ is the antipode of $K\langle \mathcal{S}_G\rangle$, i.e. the $\ast$-inverse of the identity on $K\langle\S_G\rangle$. As shown in Example~\ref{MobiusEx}, this is $\det(\mathsf{I}-\mathsf{W})$. This result was also shown directly in Corollary~\ref{cor:1}.
\end{proof}

\begin{remark}\label{GenRel}
The $\ast$-inverse relation between the permanent and the determinant is a special case of a more general identity, which is easily obtained either directly for each $k\in \mathbb{Z}$ or by induction from the base cases $k=\pm 1$:
$$
\perm(\mathsf{I}+\mathsf{W})^{\ast k} = \sum_{h\in \S_G} k^{\Omega(h)} \, h,\quad k\in \mathbb{Z}.
$$
Setting $k=-1$ recovers $\perm(\mathsf{I}+\mathsf{W})^{\ast-1}=\det(\mathsf{I}-\mathsf{W})$, while $k=0$ gives $\perm(\mathsf{I}+\mathsf{W})^{\ast0}=1$ with the convention that $0^0=1$. 
%
%
%
\end{remark}


\begin{example}[Simple cycles from the logarithm of the determinant]
Let us illustrate how the logarithm with respect to induced subgraph convolution distillates the simple cycles from a determinant or a permanent. Consider the following graph with three simple cycles $a$, $b$ and $c$ of arbitrary lengths:\\[-5ex]
\begin{center}$G=\abcExample$\end{center}
\vspace{-4mm}
for which $\det(\mathsf{I}-\mathsf{W}) = 1-a-b-c+ac+bc$. Expanding the logarithm of this determinant as a series and focusing on the first and second orders to begin with, we have 
\begin{equation}\label{LogStep1}
-\log_{\ast}\!\big(\det(\mathsf{I}-\mathsf{W})\big) = -(-a-b-c+ac+bc)+\frac{1}{2}(-a-b-c+ac+bc)^{\ast 2}-\cdots
\end{equation}
Since $V(a)\cap V(c) = V(b)\cap V(c)=\emptyset$ and $V(a)\cap V(b) \neq\emptyset$, terms such as $a\ast a$, $a\ast b$ and $b\ast ac$ all vanish and only $a\ast c$ and $b\ast c $ are non-zero. 
Thus, expanding the second order leaves $ac +ca+ bc+cb$. For the same reasons all higher orders of the logarithm are exactly zero. 
In addition, since $a$ and $c$ and $b$ and $c$ are vertex-disjoint, they commute, and the second order further simplifies to $2ac+2bc$. Thanks to these observations, Eq.~(\ref{LogStep1}) becomes
\begin{align*}
-\log_{\ast}\!\big(\det(\mathsf{I}-\mathsf{W})\big) &= a+b+c-ac-bc+\frac{1}{2}(2ac+2bc),\\
&= a+ b +c,
\end{align*}
which is indeed the formal series of the primes on $G$.
\end{example}
~

Admittedly, a $\ast$-logarithm is not very convenient to implement. Instead, we turn to its derivative for more practical results
\begin{corollary}\label{dPdz}
Let $G$ be a non-empty graph, and $\Pi:=\sum_{p\in \mathcal{P}_G}p$. 
Then 
$$
D\Pi=D\,\perm(\mathsf{I}+\mathsf{W})\ast \det(\mathsf{I}-\mathsf{W})=-\perm(\mathsf{I}+\mathsf{W})\ast D\det(\mathsf{I}-\mathsf{W}).
$$
\end{corollary}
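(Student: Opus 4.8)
The plan is to reduce Corollary \ref{dPdz} to Theorem \ref{thm:main} by differentiating the $\ast$-logarithm, exactly in parallel with how Theorem \ref{Hamiltonian} and its corollary were derived from the von Mangoldt relation. First I would recall from Theorem \ref{thm:main} that $\Pi = \log_\ast \Psi$ where $\Psi := \perm(\id+\mathsf W) = \sum_{h\in\S_G} h$, and that $\Psi^{\ast -1} = \det(\id-\mathsf W) =: \Phi$ is the $\ast$-inverse of $\Psi$ (this is the antipode identity, also Corollary \ref{cor:1}). The key structural fact I would invoke is that $D$ is a derivation for the induced subgraph convolution: since $\ell(h h') = \ell(h)+\ell(h')$ whenever $h\ast h' = h h' \neq 0$, one has $D(\alpha\ast\beta) = (D\alpha)\ast\beta + \alpha\ast(D\beta)$. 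This is the one genuinely load-bearing computation, and I expect it to be the main (though modest) obstacle: it requires checking that the length function is additive across the disjoint concatenation implicit in the convolution, which is immediate from the definition of $\ast$ on $\S_G$ but should be stated explicitly.

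Next I would differentiate the defining relation $\exp_\ast \Pi = \Psi$ (equivalently $\Psi^{\ast k}$ has a log). Applying $D$ and using that $D$ is a derivation together with the fact that $\Pi$ commutes with all its $\ast$-powers (the convolution algebra is commutative, Remark \ref{ConvoRemark}), one gets $D\Psi = (D\Pi)\ast \exp_\ast\Pi = (D\Pi)\ast\Psi$. Convolving on the right by $\Phi = \Psi^{\ast -1}$ yields $D\Pi = (D\Psi)\ast\Phi = D\,\perm(\id+\mathsf W)\ast\det(\id-\mathsf W)$, which is the first claimed equality. For the second equality, differentiate $\Psi\ast\Phi = \underline 1$ (the constant function, which is the image of $1$ and satisfies $D\underline 1 = 0$ since $\ell(1)=0$): the derivation property gives $(D\Psi)\ast\Phi + \Psi\ast(D\Phi) = 0$, hence $(D\Psi)\ast\Phi = -\Psi\ast(D\Phi) = -\perm(\id+\mathsf W)\ast D\det(\id-\mathsf W)$, completing the proof.

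An alternative, entirely self-contained route — which I would mention as it avoids invoking the Hopf-algebraic $\log_\ast$ machinery of Theorem \ref{thm:main} — is to observe that Corollary \ref{dPdz} is literally the specialization of the Corollary to Theorem \ref{Hamiltonian} (with $\mathsf A$ read as $\mathsf W$): there one already has $D\Pi = \phi \ast D\Psi$ with $\phi[H] = \det(-\mathsf W_H)$ and $\Psi[H] = \perm(\id+\mathsf W_H)$, and since $\det(\id-\mathsf W) = \sum_{H\prec G}\det(-\mathsf W_H)$ is precisely $\phi\ast\underline 1$ one identifies the right-hand side. The second form then follows from $-\,\Psi\ast D\Phi = \phi\ast D\Psi - D(\phi\ast\Psi) = \phi\ast D\Psi - D\underline 1 = \phi\ast D\Psi$, exactly as in the proof of Theorem \ref{Hamiltonian}. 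Either way the only thing requiring care is the derivation identity for $D$ under $\ast$; everything else is bookkeeping with the already-established inverse relation between the permanent and the determinant.
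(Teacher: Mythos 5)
Your proposal is correct and follows essentially the same route as the paper: the paper's proof is precisely the logarithmic-derivative computation $D\Pi = D\,\perm(\mathsf{I}+\mathsf{W})\ast\perm(\mathsf{I}+\mathsf{W})^{\ast-1} = -D\det(\mathsf{I}-\mathsf{W})\ast\det(\mathsf{I}-\mathsf{W})^{\ast-1}$ obtained by differentiating the expression for $\Pi$ in Theorem~\ref{thm:main}, combined with commutativity of $\ast$ and the inverse relation $\perm(\mathsf{I}+\mathsf{W})^{\ast-1}=\det(\mathsf{I}-\mathsf{W})$. You merely make explicit the one ingredient the paper leaves implicit (that $D$ is a derivation for $\ast$ because length is additive on vertex-disjoint concatenations), and your alternative route via the corollary to Theorem~\ref{Hamiltonian} is likewise consistent with the paper's earlier development.
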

\begin{proof}
Derivating the expression for $\Pi$ given in Theorem~\ref{thm:main} yields
\begin{align*}
D \Pi&=D\,\perm(\mathsf{I}+\mathsf{W})\ast \perm(\mathsf{I}+\mathsf{W})^{\ast-1},\\
&=-D\det(\mathsf{I}-\mathsf{W})\ast \det(\mathsf{I}-\mathsf{W})^{\ast-1}.
\end{align*}
The results follow on noting that $\ast$ is commutative and $\perm(\mathsf{I}+\mathsf{W})^{\ast-1}=\det(\mathsf{I}-\mathsf{W})$, as shown in Corollary~\ref{cor:1}, in the proof of Theorem~\ref{thm:main} and in Remark~\ref{GenRel}.
\end{proof}

%
\begin{remark}[Practical considerations]
In practice, prime counting is achieved upon replacing all labeled adjacency matrices with ordinary adjacency matrices $\mathsf{W}\mapsto z\mathsf{A}$, with $z$ a formal variable. In this situation, formal polynomials on hikes become ordinary generating functions and the derivative operator $D$ is implemented as a derivative with respect to $z$. Then, because of the commutativity of the induced subgraph convolution, Corollary~\ref{dPdz} yields the following variant formulas for the derivative of the ordinary generating function of the primes $\Pi(z):=\sum_{p\in\mathcal{P}_G} z^{\ell(p)}$, 
\begin{align*}
\frac{d\,\Pi(z)}{dz}&=\sum_{H\prec G}\frac{d}{dz}\perm(\mathsf{I}+z\mathsf{A}_H)\det(-z\mathsf{A}_{G-H})=\sum_{H\prec G}\frac{d}{dz}\perm(z\mathsf{A}_H)\det(\mathsf{I}-z\mathsf{A}_{G-H}),\\
\frac{d\,\Pi(z)}{dz}&=-\sum_{H\prec G}\perm(\mathsf{I}+z\mathsf{A}_H)\frac{d}{dz}\det(-z\mathsf{A}_{G-H})=-\sum_{H\prec G}\perm(z\mathsf{A}_H)\frac{d}{dz}\det(\mathsf{I}-z\mathsf{A}_{G-H}).\end{align*}
\end{remark}
~

\section{Simple cycles from Lie idempotents}\label{Lie}

The celebrated Milnor-Moore theorem \cite{Milnor1965} provides an explicit relation between connected graded cocommutative Hopf algebras and Lie algebras. In this section we exploit this relation to assert the existence of many more formulas for counting simple cycles on graphs. We illustrate this with two examples.\\ 
%
%
%
%

The commutative and cocommutative Hopf algebra $\S$ introduced in the previous section is graded, with gradation $c(h)=\Omega(h)$, and connected since $c(h)=0\iff h=1$ so that $K\langle\S_G|_{c(h)=0}\rangle$ is just $K$ itself as required \cite{Montgomery1993}.
Hence, we can use the theorem of Milnor and Moore to obtain that $\S_G$ is isomorphic to the universal enveloping algebra of the free graded Lie algebra formed by the primitive elements of $\S_G$, that is the simple cycles,
$$
\S_G \simeq U(\mathcal{P}_G).
$$
Taking $K$ to be a field with characteristic zero, these observations extend by linearity to  $K\langle\S\rangle$ and we
 thus have $K\langle\S_G\rangle \simeq U(K\langle\mathcal{P}_G\rangle)$.\\

These results provide new tools to pass from $\S_G$ to the free Lie algebra formed by its primitive elements: the Lie idempotents.
Lie idempotents are symmetrizers projecting the tensor algebra $T(A)$ of a Lie algebra $A$ onto the free Lie algebra. Now recall that the universal enveloping algebra of the Lie algebra $A$ is $U(A)=T(A)/I$, with $I$ the two-sided ideal generated by elements of the form $a\otimes b-b\otimes a-[a,b]$. In particular, if $A$ is free, then a Lie idempotent projects $U(A)$ onto the free Lie-algebra on the $K$-module $A$. Since $\mathcal{P}_G$ is free, this reasoning leads to: 
\begin{theorem}\label{LieTheorem}
Let $\imath$ be a Lie idempotent. Then 
\begin{align*}
\imath: &~\S_G\,\longrightarrow \mathcal{P}_G,
\end{align*}
and by linearity $\imath:\,K\langle \mathcal{S}_G\rangle \,\longrightarrow K\langle \mathcal{P}_G\rangle$.
\end{theorem}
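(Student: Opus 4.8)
The plan is to deduce the theorem from the classical characterisation of Lie idempotents, combined with the Milnor--Moore isomorphism recorded just above. By that isomorphism, $K\langle\S_G\rangle\simeq U(K\langle\mathcal P_G\rangle)$ as graded, connected, cocommutative Hopf algebras (the grading being by $\Omega$), and $K\langle\mathcal P_G\rangle$ is free as a Lie algebra. Writing $V$ for the $K$-module freely spanned by the simple cycles of $G$, so that $K\langle\mathcal P_G\rangle=L(V)$ is the free Lie algebra on $V$, the Poincar\'e--Birkhoff--Witt theorem identifies $U(L(V))$ with the tensor algebra $T(V)$, inside which $L(V)$ sits precisely as the space of primitive elements. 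A Lie idempotent $\imath$ is, by definition, a homogeneous idempotent endomorphism of $T(V)$ whose range is $L(V)$; transporting this property along the isomorphism is essentially the whole proof.

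Concretely, I would first fix a graded Hopf-algebra isomorphism $\theta\colon K\langle\S_G\rangle\to T(V)$ furnished by Milnor--Moore together with PBW. Since every bialgebra map carries primitives to primitives and $\theta$ is invertible, $\theta$ restricts to an isomorphism $K\langle\mathcal P_G\rangle\to\mathrm{Prim}(T(V))=L(V)$. I would then set $\tilde\imath:=\theta^{-1}\circ\imath\circ\theta$: it is idempotent, it preserves the $\Omega$-grading because $\theta$ and $\imath$ do, and its image is $\theta^{-1}(L(V))=K\langle\mathcal P_G\rangle$. Reading $\tilde\imath$ as the action of $\imath$ on $K\langle\S_G\rangle$ --- the reading implicit in the statement --- gives $\imath\colon K\langle\S_G\rangle\to K\langle\mathcal P_G\rangle$, and restricting attention to the distinguished spanning set yields $\imath\colon\S_G\to\mathcal P_G$. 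I would close by noting that Theorem~\ref{thm:main} is exactly the instance $\imath=\log_\ast\mathrm{id}_{\S_G}$, the first Eulerian idempotent, and that the Dynkin and Klyachko idempotents are two further explicit instances, so the statement genuinely enlarges the stock of prime-counting formulas.

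The main obstacle is conceptual rather than computational: one must make precise the sense in which an abstract Lie idempotent --- a priori a compatible family of elements of the symmetric-group algebras, or an operator on a free associative algebra --- acts on $\S_G$, which is not handed to us as a tensor algebra. It is exactly the freeness of $\mathcal P_G$ asserted above that makes the identification $U(\mathcal P_G)\cong T(V)$ available and thereby lets $\imath$ act at all; without it one would be left only with the Eulerian idempotent reached through $\log_\ast$. The secondary point to verify with some care is that $\theta$ may be chosen compatible with the grading $\Omega$, so that $\imath$ acts degreewise and its projection property descends degree by degree; this is guaranteed by the gradedness built into the Milnor--Moore theorem in the connected case.
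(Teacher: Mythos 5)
Your overall route is the one the paper itself gestures at --- Milnor--Moore plus the projection property of Lie idempotents --- but the step on which you make everything rest is false: it is the step where you (amplifying the paper's own loose wording) declare $K\langle \mathcal{P}_G\rangle$ to be the \emph{free Lie algebra} $L(V)$ and hence $K\langle \S_G\rangle\simeq U(L(V))\simeq T(V)$. The Hopf algebra $K\langle \S_G\rangle$ is commutative, so the Lie bracket on its primitives vanishes identically: $K\langle \mathcal{P}_G\rangle$ is an \emph{abelian} Lie algebra concentrated in degree $1$ of the $\Omega$-grading, whereas a free Lie algebra on two or more generators is neither abelian nor concentrated in degree $1$. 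Concretely, no graded Hopf (or even graded vector space) isomorphism $\theta:K\langle \S_G\rangle\to T(V)$ can exist: in degree $2$ the source is spanned by unordered products of vertex-disjoint pairs of simple cycles, of dimension at most $\binom{|\mathcal{P}_G|}{2}$, while $T^2(V)$ has dimension $|\mathcal{P}_G|^2$; moreover $T(V)$ is noncommutative. So the operator $\tilde\imath=\theta^{-1}\circ\imath\circ\theta$ on which your proof is built is not available, and your closing claim that the freeness of $\mathcal{P}_G$ is ``exactly'' what lets $\imath$ act at all points in the wrong direction.

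The mechanism that actually makes the theorem true needs no tensor-algebra model and no freeness. A Lie idempotent is a family $(\imath_n)_n$ in the descent algebras (or in the group algebras $K[S_n]$), and such a family acts canonically on \emph{any} graded connected cocommutative Hopf algebra $H$ over a $\mathbb{Q}$-algebra, through the standard realisation of the descent algebra by iterated coproducts, graded projections and iterated products. The theorem of Patras--Reutenauer \cite{Patras2002} --- the very reference the paper uses for the Dynkin example --- states that the image of this canonical action is exactly $\mathrm{Prim}(H)$, here $K\langle \mathcal{P}_G\rangle$. That is the statement to invoke: it answers your (correctly identified) conceptual worry about how an abstract Lie idempotent acts on $\S_G$, and it does so without the isomorphism to $T(V)$ that your write-up requires and that does not exist. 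To be fair, the paper's own justification also leans on ``freeness of $\mathcal{P}_G$'' and is no more rigorous here; but your version promotes that imprecision into the load-bearing step, so as written the argument has a genuine gap.
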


\noindent We now give two examples of Lie idempotents to illustrate this result.
 
\begin{example}[Eulerian idempotent]
Let $A$ be a cocommutative connected graded $K$-bialgebra with product $\star$ and let $\text{id}_A$ be the identity map on $A$. Then the endomorphism $\mathfrak{e}:=\log_\star(\text{id}_A)$ projects $A$ onto the $K$-submodule of primitive elements and is called the Eulerian idempotent of $A$ \cite{Grinberg2014}. Theorem~\ref{thm:main} states the Eulerian idempotents on $\mathcal{S}_G$ and $K\langle \S_G\rangle$.\\
\end{example}

\begin{example}[Dynkin idempotent]
Let $K$ a commutative $\mathbb{Q}$-algebra  and $A$ be a cocommutative connected graded $K$-Hopf algebra with product $\star$. Let $S$ be antipode of $A$ and for any $a\in A$ define $E(a):=deg(a)a$, with $deg(a)$ the grade of $a$. Then the endomorphism of $A$ denoted $\mathfrak{d}:=
S\star E$ projects $A$ onto the $K$-submodule of primitive elements and is called the Dynkin idempotent of $A$ \cite{Waldenfels1966,Patras2002,Grinberg2014}. In the context of the self-avoiding hikes, $deg(h)=c(h)=\Omega(h)=\omega(h)$, with $\omega$ the number of distinct prime factors of $h$. 
Thus the Dynkin idempotent on $\mathcal{S}_G$ reads
$$
\Pi = \sum_{h\in \mathcal{S}_G} (-1)^{\Omega(h)} h\, \sum_{h\in \mathcal{S}_G} \omega(h) h.
$$
In fact, this result is recovered from a straightforward argument in the reduced incidence algebra of $\mathcal{S}_G$. Indeed, a direct multiplication of $\Pi$ with the zeta function of $\mathcal{S}_G$ gives
$$
\Pi\,\zeta_{\mathcal{S}_G}= \sum_{h\in \mathcal{S}_G} \bigg(\,\sum_{p\in \mathcal{P}_G,~p|h}1\bigg) h =\sum_{h\in \mathcal{S}_G}\omega(h) h,
$$ 
and the Dynkin idempotent follows after a M\"{o}bius inversion of the above relation.
\end{example}

Many more Lie idempotents have been discovered and can be found in the relevant literature, see e.g. \cite{Patras1999, Patras2002, Thibon2016, Grinberg2014} and references therein. By Theorem~\ref{LieTheorem}, each one of them provides a formula for counting the primes, that is the simple cycles, on arbitrary weighted directed graphs. 
%

\section*{Acknowledgements}
P.-L. Giscard is grateful for the financial support provided by the Royal Commission for the Exhibition of 1851.

\section*{References}
\bibliographystyle{plain}

\end{document}